\documentclass[12pt]{amsart}
\usepackage{amssymb,amsfonts,amsmath,amsopn,amstext,amscd,latexsym,amsthm,enumerate,mathrsfs,bbm,color}

\usepackage[margin=36mm]{geometry}
\usepackage[all,cmtip]{xy}
\usepackage{longtable}

\newtheorem{theorem}{Theorem}[section]
\newtheorem{thm}[theorem]{Theorem}

\newtheorem{lem}[theorem]{Lemma}

\newtheorem{cor}[theorem]{Corollary}

\newtheorem{prob}[theorem]{Problem}

\theoremstyle{remark}

\numberwithin{equation}{section}

\begin{document}
\title[Some Remarks on Super $M_{p}$-groups]
{Some Remarks on Super $M_{p}$-groups}
\author[Xiaoyou Chen] {Xiaoyou Chen}
\address{School of Mathematics and Statistics, Henan University of Technology, Zhengzhou 450001, China}
\email{cxymathematics@hotmail.com}
\author[Ali Reza Moghaddamfar]{Ali Reza Moghaddamfar}
\address{Faculty of Mathematics, K. N. Toosi University of Technology, P. O. Box 16765--3381,
Tehran, Iran}
\email{moghadam@kntu.ac.ir}

\subjclass[2010]{Primary 20C20; Secondary 20C15}

\date{\today}

\keywords{$M$-group; Super-monomial Brauer character; Super $M_{p}$-group}

\begin{abstract}
Let $G$ be a finite group and $p$ be a prime divisor of $|G|$.
An irreducible $p$-Brauer character $\varphi$ of $G$ is called super-monomial if
every primitive $p$-Brauer character inducing $\varphi$ is linear. The group 
$G$ is said to be a super $M_{p}$-group if every irreducible $p$-Brauer character of $G$ is super-monomial.
In this note, we investigate the conditions under which a finite group $G$ qualifies as a super $M_{p}$-group.
We demonstrate that every normal subgroup of a super $M_{p}$-group of odd order is an $M_{p}$-group.
\end{abstract}

\maketitle

\section{Introduction}\label{sec1}
All groups considered in this note are assumed to be finite.
We refer the reader to \cite{Isaacs1976},
\cite{CSG}  and \cite{Navarro1998} for notation and terminology.
Recall that a character $\chi$ of a finite group $G$ is called {\em monomial} if
$\chi$ is induced from a linear character of some subgroup of $G$.
If all irreducible complex characters of $G$ are monomial,
then $G$ is referred to as an {\em $M$-group}.

A  fundamental result by Taketa \cite{Taketa} asserts that $M$-groups are necessarily solvable.
A well-known problem regarding $M$-groups is 
whether a normal subgroup of an $M$-group is always also an $M$-group.
Dade constructed an example in \cite{Dade} that demonstrates 
a normal subgroup of an $M$-group does not have to be an $M$-group.
However, Loukaki \cite{Loukaki} proved that if an $M$-group $G$ has order $p^{a}q^{b}$,
where $p$ and $q$ are odd primes, then every normal subgroup of $G$ remains an $M$-group.
Lewis further simplified Loukaki's proof in \cite{Lewis2006}.

We denote the set of all irreducible characters of $G$ by ${\rm Irr}(G)$.
An irreducible character $\chi\in {\rm Irr}(G)$ is called {\em super-monomial}
if every primitive character inducing $\chi$ is linear.
It is important to note that a character $\chi\in {\rm Irr}(G)$ is super-monomial
if and only if every character inducing $\chi$ is monomial.
We define a group $G$ as a {\em super $M$-group} if every irreducible character $\chi\in {\rm Irr}(G)$ is super-monomial.
Consequently, it follows that a super $M$-group must also be an $M$-group.

Isaacs  \cite{Isaacs1987} conjectured that every $M$-group of odd order is a super $M$-group.
Lewis  \cite{LewisProc2010} made progress on the topic of super $M$-groups by 
proving that Isaacs' conjecture holds true under certain conditions. Furthermore, 
if Isaacs' conjecture is true, then every normal subgroup of
an odd order $M$-group is also an $M$-group.

Let $p$ be a prime. Write ${\rm IBr}(G)$ for the set of irreducible
$p$-Brauer characters of a finite group $G$. A $p$-Brauer character
of $G$ is {\em monomial} if it is induced from a linear $p$-Brauer
character of some subgroup (not necessarily proper) of $G$. This
definition was introduced by Okuyama \cite{Okuyama} using Module
Theory. A group $G$ is called an {\em $M_{p}$-group} if every
irreducible $p$-Brauer character of $G$ is monomial. In particular,
Okuyama \cite{Okuyama} proved that $M_{p}$-groups are also solvable.
It is known by Fong-Swan theorem that an $M$-group is necessarily an $M_{p}$-group for
every prime $p$, however, an $M_{p}$-group may not be an $M$-group;
for example, ${\rm GL}(2, 3)$ is an $M_{2}$-group, not an $M$-group.
Bessenrodt \cite{Bessenrodt1990} proved that a normal Hall subgroup
of an $M_{p}$-group is also an $M_{p}$-group.

Similar to the definition of a super-monomial character,
we say that a $p$-Brauer character $\varphi\in {\rm IBr}(G)$ is {\em super-monomial} if
every primitive $p$-Brauer character inducing $\varphi$ is linear.
(An irreducible $p$-Brauer character is said to be primitive if it cannot be induced
by a $p$-Brauer character of a proper subgroup.)
A group $G$ is called a {\em super $M_{p}$-group} if every irreducible $p$-Brauer character of $G$
is super-monomial.

In this note, we discuss the conditions under which a finite group $G$ will be a super $M_{p}$-group.

\begin{thm}\label{theorem1}
Let $G$ be a group and $p$ be a prime divisor of $|G|$.

{\rm (i)} If all the primitive Brauer characters are linear and
every proper subgroup of $G$ is an $M_{p}$-group,
then $G$ is a super $M_{p}$-group.

{\rm (ii)} If $G$ is a super $M$-group,
then $G$ is a super $M_{p}$-group for every prime $p$.
\end{thm}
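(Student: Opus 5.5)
For (i), I would argue directly from the definitions. Fix $\varphi\in{\rm IBr}(G)$, a subgroup $H\le G$, and a primitive $\psi\in{\rm IBr}(H)$ with $\psi^G=\varphi$. We must show that $\psi$ is linear, and I would split into two cases according to whether $H=G$ or $H<G$.

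If $H=G$, then $\psi=\varphi$ is a primitive Brauer character of $G$, so it is linear by hypothesis.

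If $H<G$, then $H$ is an $M_p$-group by hypothesis, so $\psi$ is monomial. Thus $\psi=\mu^H$ for some linear $\mu\in{\rm IBr}(K)$ with $K\le H$. Since $\psi$ is primitive, it cannot be induced from a proper subgroup of $H$, so $K=H$. Hence $\psi=\mu$ is linear. This proves that every $\varphi\in{\rm IBr}(G)$ is super-monomial.

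For (ii), the plan is to transport the situation to ordinary characters via Fong--Swan and then use super-monomiality of ordinary characters. A super $M$-group is an $M$-group, so $G$ is solvable by Taketa's theorem, and hence so is every subgroup $H\le G$. Fix $\varphi\in{\rm IBr}(G)$ and a primitive $\psi\in{\rm IBr}(H)$ with $\psi^G=\varphi$. By the Fong--Swan theorem applied to the $p$-solvable group $H$, there is $\chi\in{\rm Irr}(H)$ with $\chi^\circ=\psi$, where $^\circ$ denotes restriction to $p$-regular elements.

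Since restriction to $p$-regular elements commutes with induction, we get $(\chi^G)^\circ=(\chi^\circ)^G=\psi^G=\varphi$. Next I would show that $\chi^G$ is irreducible. If $\chi^G$ were a sum of at least two irreducible characters, then $\varphi$ would be a sum of at least two nonzero Brauer characters, since the $^\circ$ of each irreducible ordinary character is a nonzero Brauer character. This contradicts $\varphi\in{\rm IBr}(G)$. So $\chi^G\in{\rm Irr}(G)$ and $\chi$ induces it.

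Because $G$ is a super $M$-group, $\chi^G$ is super-monomial. Hence every character inducing it, in particular $\chi$, is monomial. Write $\chi=\lambda^H$ with $\lambda$ a linear character of some $K\le H$. Then
\[
\psi=\chi^\circ=(\lambda^\circ)^H ,
\]
and $\lambda^\circ$ is a linear Brauer character of $K$. Primitivity of $\psi$ forces $K=H$, so $\psi=\lambda^\circ$ is linear.

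The only delicate points are the compatibility of $^\circ$ with induction and the irreducibility of $\chi^G$; both are standard. I expect no serious obstacle beyond checking that Fong--Swan applies, which needs the solvability of $H$ inherited from $G$.
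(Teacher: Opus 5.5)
Your proof is correct. Part (i) is the paper's argument, organized by whether the inducing subgroup $H$ equals $G$ rather than by whether $\varphi$ is primitive. Your version treats an arbitrary primitive $\psi$ inducing $\varphi$ explicitly. The paper exhibits a single inducing $\lambda$ from a proper subgroup and leaves the step ``monomial and primitive implies linear'' implicit.

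In part (ii) the skeleton also matches the paper: a Fong--Swan lift on the solvable subgroup $H$, the identity $(\chi^G)^\circ=(\chi^\circ)^G$, and irreducibility of $\chi^G$. The last step, however, runs in the opposite direction.
\begin{itemize}
\item The paper invokes Lemma 2.1 (cited from Lewis) to push primitivity \emph{up} from the Brauer character to its ordinary lift. The definition of a super $M$-group then applies verbatim and makes the lift linear.
\item You instead use the reformulation that a super-monomial character is induced only by monomial characters. This is stated in the paper's introduction; to justify it, take $K\le H$ minimal carrying a character that induces $\chi$. That character is then primitive, and hence linear. You then push the monomial factorization \emph{down} to $\psi=(\lambda^\circ)^H$, where primitivity of $\psi$ is a hypothesis rather than something to prove.
\end{itemize}
Your route avoids the external lemma at the cost of that reformulation. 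The saving is modest, though. Lemma 2.1 has an equally short proof in the same spirit: if $\chi=\theta^G$ with $\theta$ a character of a proper subgroup, then $(\theta^\circ)^G=\chi^\circ=\varphi$ forces $\theta^\circ$ to be an irreducible Brauer character inducing $\varphi$. That argument does not even need $p$-solvability.
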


Observe that a super $M_{p}$-group is not necessarily a super $M$-group.
For example, ${\rm SL}(2, 3)$ is a super $M_{2}$-group but not an $M$-group.

Similar to Isaacs' conjecture for $M$-groups of odd order,
we propose the following problem.

\begin{prob}\label{problem1}
Let $G$ be a group and $p$ be a prime.
If $G$ is an $M_{p}$-group of odd order,
is it true that $G$ is a super $M_{p}$-group?
\end{prob}

It is important to note that if the prime $p$ does not divide the order of the group $|G|$,
then ${\rm IBr}(G)={\rm Irr}(G)$.
Consequently, if Problem \ref{problem1} has an affirmative answer, this would confirm 
Isaacs' conjecture.

Additionally, we have the following theorem.

\begin{thm}\label{theorem2}
Assume that every $M_{p}$-group with an odd order is a super $M_{p}$-group.
Then every normal subgroup of an odd-order $M_{p}$-group is also an $M_{p}$-group.
\end{thm}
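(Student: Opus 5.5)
We would argue by contradiction, following the strategy used by Lewis for ordinary $M$-groups. Let $G$ be an $M_{p}$-group of odd order, let $N \trianglelefteq G$ and $\theta\in{\rm IBr}(N)$. By hypothesis $G$ is a super $M_{p}$-group, and the goal is to show that $\theta$ is monomial. Since $|G|$ is odd, $G$ is solvable, so the Clifford theory of $p$-Brauer characters is available. This includes the Clifford correspondence, the fact that an induced Brauer character whose restriction to $N$ is a multiple of $\theta$ behaves as in the ordinary case, and the Mackey formula for induced Brauer characters.

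\emph{Step 1: a linear character of $G$-level data.} Let $T=I_G(\theta)$. Choose $\eta\in{\rm IBr}(T\mid\theta)$, so that $\varphi=\eta^{G}\in{\rm IBr}(G)$ and $\eta_{N}=e\theta$. Write $\eta=\mu^{T}$ with $\mu\in{\rm IBr}(U)$ primitive for some $U\le T$; such a $\mu$ exists by taking $U$ minimal among subgroups from which $\eta$ is induced. Then $\mu^{G}=\varphi$. Any character inducing $\mu$ would also induce $\varphi$, so $\mu$ is a primitive Brauer character inducing $\varphi$. Since $\varphi$ is super-monomial, $\mu=\lambda$ is linear.

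\emph{Step 2: pass to $UN$.} The subgroup $UN\le T$ satisfies $(\lambda^{UN})^{T}=\eta$, so $\lambda^{UN}$ is irreducible. Because $UN=U\cdot N$ is a single $(U,N)$-double coset, Mackey gives
\[
(\lambda^{UN})_{N}=(\lambda_{U\cap N})^{N}.
\]
The left side is a multiple $f\theta$, since it lies under $\eta$, whose restriction to $N$ is $e\theta$. Hence $(\lambda_{U\cap N})^{N}=f\theta$ with $|N:U\cap N|=f\,\theta(1)$. If $f=1$, then $\theta=(\lambda_{U\cap N})^{N}$ is induced from a linear Brauer character, and we are done.

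\emph{Step 3 (main obstacle): forcing $f=1$.} This step is where the real work lies. The freedom available is the choice of $\eta$ over $\theta$ and the choice of the primitive pair $(U,\lambda)$. First, we would take a counterexample with $|G|+|N|$ minimal. Then we would reduce to the case where $\theta$ is $G$-invariant, so $T=G$: if $T<G$, one tries to replace $G$ by a suitable odd-order $M_{p}$-group containing $N$, using Clifford correspondence to carry monomiality back and forth. Next, in the invariant case with $G/N$ of odd order, we would use that in odd order (hence $p$-solvable) groups $\theta$ has a canonical extension-type behaviour. Concretely, we would invoke Isaacs' $\pi$-partial character theory / Fong–Swan lifts to choose $\eta$ with $e$ controlled and to pick $U$ with $U\cap N$ as large as possible. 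The aim is to show that a nontrivial $f$ would produce a primitive non-linear Brauer character inducing some irreducible Brauer character of $G$, contradicting super-monomiality.

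If this direct route stalls, the fallback is as follows. Write $\theta=\alpha^{N}$ with $\alpha$ primitive on $V\le N$, and consider $\alpha$ inside $N_{G}(V)$ via a Frattini-type argument. The aim is to build a Brauer character of $G$ induced from an extension of $\alpha$, which super-monomiality then forces to be linear.
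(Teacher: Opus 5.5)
\paragraph{There is a genuine gap: Step~3 is not carried out.} Your Steps~1 and~2 are sound, and they do a little more than you say. Since $\eta=(\lambda^{UN})^{T}$, comparing degrees gives $e=|T:UN|\,f$. So whenever $\theta$ extends to $T={\rm I}_G(\theta)$, you can take $\eta$ to be an extension. Then $e=1$, hence $f=1$, and $\theta=(\lambda_{U\cap N})^{N}$ is monomial. This covers in particular the case where $G/N$ is cyclic.

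For general $N$, however, Step~3 is the whole difficulty. An invariant $\theta$ need not extend when $T/N$ is not cyclic. In that case $e>1$, and nothing in your argument controls $f$. None of the devices you list is shown to force $f=1$: the minimal counterexample, replacing $G$ by a group containing $N$ (there is no reason for $T$ to be an $M_p$-group), canonical extensions, $\pi$-partial characters, or a Frattini argument. As written, your proposal proves the statement only for those $\theta$ that extend to their inertia groups.

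\paragraph{The missing idea.} You use only the super-monomiality of $G$ itself. The hypothesis says more: it applies to \emph{every} odd-order $M_p$-group. That lets you avoid the non-extendible case entirely by descending through prime-index normal subgroups.
\begin{enumerate}
\item Induct on $|G:N|$. Choose a maximal normal subgroup $M$ of $G$ with $N\le M<G$. Since $G$ is solvable, $|G:M|$ is prime.
\item Take $\theta\in{\rm IBr}(M)$. If ${\rm I}_G(\theta)=G$, then $\theta$ extends to some $\varphi\in{\rm IBr}(G)$ because $G/M$ is cyclic. Write $\varphi=\lambda^{G}$ with $\lambda$ linear on $H$. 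Mackey forces $G=HM$, so $\theta=\varphi_M=(\lambda_{H\cap M})^{M}$ is monomial.
\item If instead ${\rm I}_G(\theta)=M$, then $\theta^{G}\in{\rm IBr}(G)$, and super-monomiality of $G$ makes $\theta$ monomial. This is your Step~1 with $T=M$.
\item Hence $M$ is an odd-order $M_p$-group, so by the hypothesis it is itself a super $M_p$-group. Since $N\lhd M$ and $|M:N|<|G:N|$, induction shows that $N$ is an $M_p$-group.
\end{enumerate}
This is the paper's proof (Lemma~\ref{lemma2.4}). Your Steps~1--2 are essentially its one-step case. What replaces your Step~3 is applying the hypothesis afresh at each level of the descent.
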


In particular, if $p$ does not divide the order of $G$,
then ${\rm IBr}(G)={\rm Irr}(G)$, and this leads us to the following corollary, which aligns with 
 \cite[Corollary 2.2]{LewisProc2010}.

\begin{cor}\label{cor1}
If Isaacs' conjecture is true,
then normal subgroups of odd-order $M$-groups are also $M$-groups.
\end{cor}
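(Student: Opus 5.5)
The corollary is a direct specialization of Theorem \ref{theorem2} to the case $p\nmid |G|$, so the plan is to reduce to that theorem rather than argue from scratch. Assume Isaacs' conjecture: every $M$-group of odd order is a super $M$-group. Let $G$ be an $M$-group of odd order and $N\trianglelefteq G$. We want to show that $N$ is an $M$-group.

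\emph{Choice of prime.} Fix a prime $p$ with $p\nmid |G|$, for instance $p=2$, which works because $|G|$ is odd. For every subgroup $H\le G$ we then have $p\nmid|H|$, so ${\rm IBr}(H)={\rm Irr}(H)$. In this coprime situation Brauer characters are ordinary characters and induction agrees. Linear Brauer characters are exactly linear characters, and primitivity means the same thing in both settings. Hence, for groups of $p'$-order:
\begin{itemize}
\item a Brauer character is monomial if and only if the ordinary character is;
\item it is super-monomial if and only if the ordinary character is;
\item $H$ is an $M_p$-group if and only if it is an $M$-group;
\item $H$ is a super $M_p$-group if and only if it is a super $M$-group.
\end{itemize}

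\emph{Step 1: the hypothesis of Theorem \ref{theorem2}.} Theorem \ref{theorem2} quantifies over all odd-order $M_p$-groups. The coprime translation only covers those with $p\nmid |H|$, so fixing $p$ globally needs care. The remedy is to take $p=2$. Every odd-order group $H$ is a $2'$-group, so every odd-order $M_2$-group is an odd-order $M$-group. By Isaacs' conjecture it is a super $M$-group, and hence a super $M_2$-group. This is exactly the hypothesis of Theorem \ref{theorem2} with $p=2$.

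\emph{Step 2: the conclusion.} Since $G$ is an odd-order $M$-group, it is an odd-order $M_2$-group. Theorem \ref{theorem2} then shows that $N$ is an $M_2$-group. Because $|N|$ is odd, ${\rm IBr}(N)={\rm Irr}(N)$, so $N$ is an $M$-group.

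\emph{Expected obstacle.} The only real subtlety is the one handled in Step 1: the hypothesis of Theorem \ref{theorem2} ranges over all odd-order groups, and the dictionary between Brauer and ordinary notions holds only for groups coprime to $p$. Choosing $p=2$ resolves this uniformly. Notationally, Theorem \ref{theorem2} assumes $p$ divides $|G|$ only implicitly, since the paper's Theorem \ref{theorem1} requires that. I would check that the proof of Theorem \ref{theorem2} does not use $p\mid|G|$. If it did, one would instead rerun that proof verbatim with ordinary characters, which is Lewis's \cite[Corollary 2.2]{LewisProc2010} argument.
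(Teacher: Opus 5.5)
Your proposal is correct and follows the paper's route: the corollary comes from specializing Theorem 2 to a prime not dividing the group order, where ${\rm IBr}={\rm Irr}$ and all the Brauer notions (monomial, primitive, super-monomial) coincide with the ordinary ones. Your explicit choice $p=2$ is a worthwhile refinement the paper leaves implicit: the hypothesis of Theorem 2 quantifies over all odd-order $M_p$-groups, and $p=2$ is the prime that makes it follow directly from Isaacs' conjecture. Your check also comes out as you hoped, since the proof of Lemma 2.4 never uses $p\mid |G|$.
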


\section{Proofs}\label{sec2}
We begin by presenting a lemma that will be utilized in the proof of Theorem \ref{theorem1}. 
Before proceeding, we need to introduce some additional notation. 
Let $G$ be a group, $p$ a prime, and $\chi\in {\rm Irr}(G)$ represent an irreducible character of $G$.
We denote $\chi^{0}$ as the restriction of $\chi$ to the set of $p$-regular elements
of $G$ (elements whose orders are not divisible by $p$). Furthermore, we say that  
$\chi$ is primitive if $\chi\neq \theta^{G}$ for any character $\theta$
of a proper subgroup of $G$.

According to the Fong-Swan theorem, if $\varphi$ is an irreducible $p$-Brauer
character of a $p$-solvable group $G$, then there exists a character $\chi\in {\rm Irr}(G)$
such that $\chi^{0}=\varphi$.

Based on the notation introduced above, we present the following lemma.

\begin{lem}\label{lemma2.1}
Let $G$ be a $p$-solvable group.
Suppose $\chi\in {\rm Irr}(G)$ and $\chi^{0}=\varphi\in {\rm IBr}(G)$.
If $\varphi$ is primitive, then $\chi$ is also primitive.
\end{lem}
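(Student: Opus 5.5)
We argue by contraposition: assuming $\chi$ is not primitive, we will show that $\varphi=\chi^{0}$ is induced from a proper subgroup, so $\varphi$ is not primitive. Suppose therefore that $\chi=\theta^{G}$ for some character $\theta$ of a proper subgroup $H<G$. Since $\chi$ is irreducible and induction is additive, $\theta$ must itself be irreducible; indeed, if $\theta=\theta_1+\theta_2$ with both summands nonzero characters, then $\chi=\theta_1^G+\theta_2^G$ would be reducible.

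The key observation is that restriction to $p$-regular elements commutes with induction. The induction formula
\[
\theta^{G}(g)=\frac{1}{|H|}\sum_{x\in G}\dot\theta(xgx^{-1})
\]
evaluated at a $p$-regular element $g$ only involves values of $\theta$ at conjugates $xgx^{-1}\in H$. These conjugates are again $p$-regular. Hence $(\theta^{G})^{0}=(\theta^{0})^{G}$, where on the right we mean the induction of the Brauer character $\theta^{0}$ of $H$. This gives
\[
\varphi=\chi^{0}=(\theta^{0})^{G}.
\]
Here $\theta^{0}$ is a $p$-Brauer character of $H$, though possibly reducible, and it is nonzero.

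It remains to rule out reducibility of $\theta^{0}$. Write $\theta^{0}=\sum_i a_i\mu_i$ with $\mu_i\in{\rm IBr}(H)$ and $a_i$ nonnegative integers, not all zero. Then $\varphi=\sum_i a_i\mu_i^{G}$. Each $\mu_i^{G}$ is a nonzero $p$-Brauer character of $G$, and $\varphi$ is irreducible, while irreducible Brauer characters are linearly independent. It follows that exactly one $a_i$ equals $1$ and all the others vanish. Thus $\theta^{0}=\mu\in{\rm IBr}(H)$ and $\varphi=\mu^{G}$ with $H$ proper, so $\varphi$ is not primitive, a contradiction. Note that this last step already shows $\theta^{0}$ is irreducible, so we do not need to appeal to Fong–Swan for $H$, which is $p$-solvable as a subgroup of $G$. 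The $p$-solvability hypothesis is therefore used only implicitly, in the existence of $\chi$.

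The only step needing genuine care is the identity $(\theta^{G})^{0}=(\theta^{0})^{G}$. This is a routine check on the induction formula, using that conjugation preserves element orders. Everything else is bookkeeping with decompositions into irreducible Brauer characters.
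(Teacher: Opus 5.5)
Your proof is correct. It differs from the paper mainly in being self-contained: the paper gives no argument of its own and simply specializes Lewis's Lemma~3.3, which is stated for an arbitrary set of primes $\pi$, to $\pi=p'$. You instead prove this special case directly from two standard facts: $(\theta^{G})^{0}=(\theta^{0})^{G}$, and $\theta^{0}$ is a nonzero nonnegative integer combination of members of ${\rm IBr}(H)$.

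Your route shows that the lemma amounts to the compatibility of induction with restriction to $p$-regular elements. It also shows that $p$-solvability plays no role once the lift $\chi$ is assumed to exist. Your remark that $p$-solvability is ``used only implicitly, in the existence of $\chi$'' is slightly off, because that existence is a hypothesis of the lemma. The hypothesis matters only where the lemma is applied, namely via Fong--Swan in part (ii) of Theorem~\ref{theorem1}.

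Your steps showing that $\theta$ and $\theta^{0}$ are irreducible are not needed under the paper's definition of primitivity. That definition already rules out induction from an arbitrary Brauer character of a proper subgroup. The extra steps do no harm.
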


\begin{proof}
This result follows from taking $\pi=p'$ in \cite[Lemma 3.3]{LewisProc2010},
where $p'$ is the complement of $p$ in the set of all primes.
\end{proof}

Next, we provide the proof of Theorem \ref{theorem1}.

\begin{proof}[Proof of Theorem \ref{theorem1}]
(i) Let $\varphi\in {\rm IBr}(G)$. If $\varphi$ is primitive,
then by hypothesis, $\varphi$ is linear and therefore super-monomial.

Now, assume that $\varphi$ is not primitive. In this case, there exists a proper subgroup $H$ of $G$
and some $\lambda\in {\rm IBr}(H)$ such that $\lambda^{G}=\varphi$.
Since $H$ is an $M_{p}$-group, it follows that $\lambda$ is monomial.
Consequently, $\varphi$ is super-monomial, which implies that $G$ is a super $M_{p}$-group.

(ii) Let $\varphi\in {\rm IBr}(G)$ and suppose $\varphi=\lambda^{G}$,
where $\lambda$ is a primitive $p$-Brauer character of a subgroup $H$ of $G$.
Note that $H$ is solvable since $G$ is solvable. Therefore, there exists $\mu\in {\rm Irr}(H)$ such that
$\mu^{0}=\lambda$. Since $\lambda$ is primitive, according to Lemma \ref{lemma2.1}, 
$\mu$ also be primitive.
Additionally, we have:
$$\varphi=\lambda^{G}=(\mu^{0})^{G}=(\mu^{G})^{0}.$$
It is important to observe that because $\varphi$ is irreducible, $\mu^{G}$ must be an irreducible character of $G$.
Since $G$ is a super $M$-group, it follows that $\mu$ is linear, which implies that $\lambda$ is also linear.
Consequently, $\varphi$ is super-monomial, and thus $G$ is a super $M_{p}$-group.
\end{proof}

Next, we will outline several lemmas that will be utilized in the proof of Theorem \ref{theorem2}. Let $\chi$ be a character of a group $G$.
Let $H$ be a subgroup of $G$, and let $\eta$ be a character of $H$.
We denote the restriction of $\chi$ to $H$ as $\chi_H$, and the induced character from $\eta$ as $\eta^G$.

The following lemma is known as  Mackey's theorem,
which can found in \cite[Problem 8.5]{Navarro1998}.

\begin{lem}[G. W. Mackey]\label{lem2.2}
Let $H$ and $K$ be subgroups of $G$, and
let $T$ be a set of double coset representatives relative to $H$ and $K$, such that
$$G=\bigcup_{t\in T}HtK,$$
is a disjoint union. Suppose $\varphi$ is a Brauer character of
$H$. For any element $g\in G$, we denote by $\varphi^{g}$ the Brauer character of
$H^{g}$ defined by $$\varphi^{g}(h^{g})=\varphi(h) \ \ \mbox{for
every} \ h\in H.$$ Then, we have 
$$(\varphi^{G})_{K}=\sum_{t\in T}(\varphi^{t}_{H^{t}\cap K})^{K}.$$
In particular, if $G=HK$, then $(\varphi^{G})_{K}=(\varphi_{H\cap
K})^{K}$. Furthermore, if $(\varphi^{G})_{K}$ is irreducible, then it follows that $G=HK$.
\end{lem}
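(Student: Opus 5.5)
The strategy is to prove the two formulas first for one transversal and then deduce the final claim by comparing degrees. Brauer characters are class functions on the set $G^{0}$ of $p$-regular elements. Induction of a Brauer character is given by the usual formula
$$\varphi^{G}(x)=\frac{1}{|H|}\sum_{g\in G}\dot\varphi(gxg^{-1}),$$
where $x\in G^{0}$ and $\dot\varphi$ is $\varphi$ on $H^{0}$ and $0$ elsewhere. Since this is formally identical to the ordinary formula, I would repeat the classical proof of Mackey's formula for complex characters, with every element taken $p$-regular.

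Concretely, fix $k\in K^{0}$ and split the sum over $g\in G$ according to the double cosets $HtK$. Writing $g=htk'$ gives $g k g^{-1}=h\,(t k' k k'^{-1} t^{-1})\,h^{-1}$. For fixed $t$, the number of ways to write an element of $HtK$ as $htk'$ is $|H\cap tKt^{-1}|=|H^{t}\cap K|$; here I use the convention $H^{t}=t^{-1}Ht$, matching $\varphi^{t}(h^{t})=\varphi(h)$. Using the class-function property of $\varphi$ on $H$, the $t$-contribution becomes
$$\frac{1}{|H^{t}\cap K|}\sum_{k'\in K}\dot{(\varphi^{t})}_{H^{t}\cap K}(k'kk'^{-1}),$$
and this is exactly $\big((\varphi^{t})_{H^{t}\cap K}\big)^{K}(k)$. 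The element $k'kk'^{-1}$ stays $p$-regular, so every value involved lies in the domain of Brauer characters. Summing over $t\in T$ gives the formula. I expect the bookkeeping of conjugation conventions ($H^{t}$ versus ${}^{t}H$) to be the only real obstacle, and I would fix it by the choice just made.

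If $G=HK$, we may take $T=\{1\}$, which gives $(\varphi^{G})_{K}=(\varphi_{H\cap K})^{K}$.

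For the last claim, assume $(\varphi^{G})_{K}$ is irreducible. Each summand $\big((\varphi^{t})_{H^{t}\cap K}\big)^{K}$ is a genuine Brauer character: it is a nonnegative integer combination of irreducible Brauer characters, and it is nonzero whenever $\varphi\neq0$, since its degree is $\varphi(1)|K:H^{t}\cap K|>0$. An irreducible Brauer character cannot be written as a sum of two nonzero Brauer characters. Hence $|T|=1$, so $G=HK$.

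Alternatively, the same conclusion follows from degrees. We have
$$\varphi(1)|G:H|=\sum_{t\in T}\varphi(1)\,|K:H^{t}\cap K|,$$
and the single double coset $HK$ already has size $|H||K|/|H\cap K|$. So $|T|=1$ forces $|HK|=|G|$, that is, $G=HK$.
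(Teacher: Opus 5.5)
Your proof is correct. The paper does not prove this lemma at all: it cites \cite[Problem 8.5]{Navarro1998} for the formula and states the final sentence without justification, so your proposal supplies what the paper leaves out.

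The double-coset computation checks out with your convention $H^{t}=t^{-1}Ht$:
\begin{itemize}
\item the fibres of $(h,k')\mapsto htk'$ have size $|H\cap tKt^{-1}|=|H^{t}\cap K|$;
\item conjugation by $h\in H$ is absorbed by $\dot\varphi$;
\item $\dot\varphi(tyt^{-1})=\dot{(\varphi^{t})}(y)$ for $y\in K^{0}$.
\end{itemize}
Together these give exactly $((\varphi^{t})_{H^{t}\cap K})^{K}(k)$.

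Your argument for the last claim is also right. It rests on two facts. First, each summand is the Brauer character of an actual $K$-module, since restriction, conjugation and induction of modules preserve this. Second, ${\rm IBr}(K)$ is linearly independent, so a decomposition into irreducibles with nonnegative integer coefficients is unique. A module-theoretic route would give you the first fact for free: apply Mackey's decomposition $(V^{G})_{K}\cong\bigoplus_{t}((V^{t})_{H^{t}\cap K})^{K}$ and then take Brauer characters. Your character-level computation has to import it separately, though it is standard either way.

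Two small remarks:
\begin{itemize}
\item The passage from $|T|=1$ to $G=HK$ only needs that $HK$ is the double coset containing $1$.
\item Your ``alternative'' degree count is not independent of the first argument, since it already presupposes $|T|=1$. It only re-verifies that a single double coset fills $G$.
\end{itemize}
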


\begin{lem}\label{lemma2.3}
Let $\varphi$ be a monomial Brauer character of a group $G$, and let $K\leq G$ be a subgroup of $G$.
If $\varphi_{K}$ is irreducible, then $\varphi_{K}$ is also a monomial Brauer character.
\end{lem}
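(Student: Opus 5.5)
Since $\varphi$ is monomial, we begin by writing $\varphi=\lambda^{G}$, where $\lambda$ is a linear Brauer character of some subgroup $H\leq G$. Our aim is to exhibit $\varphi_{K}$ as a character induced from a linear Brauer character of a subgroup of $K$. Mackey's theorem (Lemma \ref{lem2.2}) is the natural tool, applied to the pair of subgroups $H$ and $K$ with $\lambda$ in place of $\varphi$.

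The key step is the last assertion of Lemma \ref{lem2.2}. Because $(\lambda^{G})_{K}=\varphi_{K}$ is irreducible by hypothesis, that assertion gives $G=HK$. To see why this holds, note that Mackey expresses $(\lambda^{G})_K$ as a sum over the double cosets $HtK$. Each summand $(\lambda^{t}_{H^{t}\cap K})^{K}$ is a nonzero Brauer character, so irreducibility forces there to be exactly one double coset. The special case of Lemma \ref{lem2.2} then yields
$$\varphi_{K}=(\lambda^{G})_{K}=(\lambda_{H\cap K})^{K}.$$

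It remains to check that $\lambda_{H\cap K}$ is a linear Brauer character of $H\cap K$. This is immediate: the restriction of a linear Brauer character to a subgroup is again a (nonzero, degree one) Brauer character, hence linear and irreducible. Therefore $\varphi_K$ is induced from a linear Brauer character of the subgroup $H\cap K$ of $K$, so $\varphi_K$ is monomial.

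The only potential obstacle is justifying $G=HK$, that is, the final clause of Lemma \ref{lem2.2}. Since we may assume that lemma, this step is essentially free. If it needed verification, I would argue as above: each Mackey summand is a nonzero Brauer character, so an irreducible sum must have exactly one term.
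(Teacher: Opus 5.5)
Your proposal is correct and follows the paper's proof exactly: you write $\varphi=\lambda^{G}$, use the last clause of Lemma~\ref{lem2.2} to get $G=HK$, and conclude $\varphi_K=(\lambda_{H\cap K})^{K}$ with $\lambda_{H\cap K}$ linear. Your extra justification of $G=HK$ (each Mackey summand is a nonzero Brauer character, so irreducibility forces a single double coset) is a sound supplement to what the paper simply cites.
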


\begin{proof}
Since $\varphi$ is a monomial Brauer character, there exists a subgroup $H$ of $G$ and a linear Brauer character
$\lambda\in {\rm IBr}(H)$ such that $\lambda^{G}=\varphi$.
By assumption, $(\lambda^{G})_{K}\in {\rm IBr}(K)$, and it follows
from Lemma \ref{lem2.2} that $G=HK$. Thus, we have:
$$\varphi_{K}=(\lambda^{G})_{K}=(\lambda_{H\cap K})^{K},$$
which demonstrates that $\varphi_{K}$ is indeed a monomial Brauer character.
\end{proof}

\begin{lem}\label{lemma2.4}
Let $\mathcal{G}$ be a class of groups that is closed under normal subgroups,
and let $\mathcal{M}$ be the class of $M_{p}$-groups in $\mathcal{G}$.
Suppose that every group in $\mathcal{M}$ is a super $M_{p}$-group.
Then, $\mathcal{M}$ is closed under normal subgroups.
\end{lem}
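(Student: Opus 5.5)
Let $G\in\mathcal{M}$ and $N\trianglelefteq G$; we must show $N$ is an $M_p$-group. Since $\mathcal{G}$ is closed under normal subgroups, $N\in\mathcal{G}$, and it suffices to show every $\theta\in{\rm IBr}(N)$ is monomial. I will argue by induction on $|G|$ (equivalently, by choosing a counterexample pair $(G,N)$ with $|G|$ minimal). The point of the induction is that any proper subgroup $H<G$ which is itself an $M_p$-group lying in $\mathcal{G}$ is again in $\mathcal{M}$, so its normal subgroups are $M_p$-groups. The subtlety is that $\mathcal{G}$ is only closed under normal subgroups, so the groups to which I apply induction must be reached through normal subgroups or otherwise kept inside $\mathcal{G}$. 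In the intended application ($\mathcal{G}$ = odd-order groups) this is harmless, and I will phrase the argument so that the inductive groups are subgroups I can check belong to $\mathcal{G}$.

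Fix $\theta\in{\rm IBr}(N)$. Let $\varphi\in{\rm IBr}(G)$ lie over $\theta$. Since $G$ is an $M_p$-group it is solvable (Okuyama), so Clifford theory for Brauer characters applies. Let $T=I_G(\theta)$. There is $\psi\in{\rm IBr}(T\mid\theta)$ with $\psi^G=\varphi$, and $\psi_N=e\theta$. Now write $\psi=\lambda^T$ with $\lambda\in{\rm IBr}(H)$ primitive for some $H\le T$. Then $\lambda^G=\varphi$, and since $G$ is a super $M_p$-group, $\lambda$ is linear. So $\psi=\lambda^T$ is induced from a linear character of $H\le T$.

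Next apply Mackey's theorem (Lemma~\ref{lem2.2}) with $K=N$. Since $N\trianglelefteq T$, we get
\[
\psi_N=(\lambda^T)_N=\sum_{t}\bigl((\lambda^t)_{H^t\cap N}\bigr)^N .
\]
Each summand is induced from a linear character of $H^t\cap N$. Each of its irreducible constituents is a $T$-conjugate of $\theta$, hence equals $\theta$. Consider one summand $\mu^N$ with $\mu$ linear on $L=H\cap N$. If $\mu^N$ were irreducible it would equal $\theta$, and $\theta$ would be monomial. So the core difficulty is forcing irreducibility, i.e.\ showing $HN=T$ and $e=1$, or reducing to a smaller case.

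My plan for this step, which I expect to be the main obstacle, is as follows. If $T<G$, then $T$ is an $M_p$-group, because $\psi$ and hence all its relevant characters are monomial. Then work in $T$ with $N\trianglelefteq T$ and $\theta$ invariant, and invoke induction. So assume $\theta$ is $G$-invariant. Now use that $\varphi=\lambda^G$ with $\lambda$ linear on $H$, and compare degrees: $\varphi(1)=|G:H|$ while $\varphi_N=e\theta$. The aim is to show that $\lambda_{H\cap N}$ induces irreducibly to $N$, i.e.\ $HN=G$ and $|N:H\cap N|=\theta(1)$. Lemma~\ref{lemma2.3} then gives monomiality whenever some restriction is irreducible. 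First I would try to pass to $HN$. By super-monomiality applied to characters of $HN$ inducing $\varphi$, the group $HN$ inherits the required properties, and when $HN<G$ induction applies to $N\trianglelefteq HN$. When $HN=G$, Mackey gives $\varphi_N=(\lambda_{H\cap N})^N$, so $\theta(1)e=|N:H\cap N|$. Finally, a primitive-character argument inside $N$ (choose $\lambda$ among linear characters inducing $\varphi$ with $H$ maximal) should force $e=1$, giving $\theta=(\lambda_{H\cap N})^N$ monomial.
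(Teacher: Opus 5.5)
There is a genuine gap, in two places. First, you cannot pass to smaller groups. From $\psi^G=\varphi$ and the super-monomiality of $G$ you only learn that the members of ${\rm IBr}(T\mid\theta)$ are super-monomial. Brauer characters of $T=I_G(\theta)$ lying over other members of ${\rm IBr}(N)$ need not induce irreducibly to $G$, so nothing shows that $T$ (or $HN$) is an $M_p$-group. Moreover, $T$ and $HN$ are not normal in $G$, so they need not lie in $\mathcal{G}$, and the hypothesis of the lemma cannot be applied to them. Your remark about odd order only covers subgroup-closed classes, which is weaker than the statement. Reorganizing the induction around a fixed $\theta$ does not help either. Super-monomiality does not descend from a group to the characters of its normal subgroups. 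The only source of super-monomiality for a smaller group is the class hypothesis, and that needs the \emph{entire} smaller group to be an $M_p$-group in $\mathcal{G}$.

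Second, the invariant case cannot be closed by forcing $e=1$. The number $e$ depends only on $\varphi$ and $\theta$, not on the choice of $H$, and it can exceed $1$ even when $HN=G$. Take $E$ extraspecial of order $q^3$ with $q\neq p$, $Z=Z(E)$, $G=E\times E$, $N=E\times 1$, and $\theta\in{\rm Irr}(E)$ of degree $q$ with central character $\mu$. Let $H=\Delta(E)\times(Z\times 1)$, where $\Delta(E)$ is the diagonal subgroup, and let $\lambda=1_{\Delta(E)}\times\mu$. Then $HN=G$ and $\lambda^G=\theta\times\overline{\theta}$ is irreducible, but its restriction to $N$ is $q\theta$. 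More generally, if $\theta$ is $G$-invariant and does not extend, every $\varphi$ over $\theta$ has $e>1$.

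The missing idea is to descend one normal subgroup of prime index at a time, as the paper does. Pick a maximal normal subgroup $M$ of $G$ containing $N$, so $|G:M|$ is prime. Every $\eta\in{\rm IBr}(M)$ falls into one of two cases:
\begin{enumerate}
\item $\eta$ is $G$-invariant. Then it extends to $G$ because the quotient is cyclic, so it is monomial by Lemma~\ref{lemma2.3}.
\item $\eta$ induces irreducibly to $G$. Then it is monomial by the super-monomiality of $G$.
\end{enumerate}
Thus $M$ is an $M_p$-group. It belongs to $\mathcal{G}$ since $M\lhd G$, so it is super $M_p$ by hypothesis, and induction on $|G:N|$ finishes. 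Prime index makes $e=1$ automatic in the invariant case, and the hypothesis is only ever applied to normal subgroups.
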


It is important to note that $\mathcal{G}$ is a class of groups that is closed under normal subgroups.
This means that if $G\in \mathcal{G}$ and $N\lhd G$, then $N\in \mathcal{G}$.
Additionally, since $\mathcal{M}$ consists of $M_{p}$-groups in $\mathcal{G}$, 
this implies that if $G\in \mathcal{M}$ and $N\lhd G$, then $N\in \mathcal{G}$. Therefore, 
if we can prove that $\mathcal{M}$ is closed under normal subgroups, we can conclude that 
$N\in \mathcal{M}$.

\begin{proof}[Proof of Lemma \ref{lemma2.4}]
Fix a group $G\in \mathcal{M}$, and let $N$ be a normal subgroup of $G$.
If $N=G$, the result is trivial. Therefore, we assume that $N<G$,
and there exists a maximal normal subgroup $M$ of $G$ such that $N\leq M<G$.
Since $G$ is an $M_{p}$-group it follows that $G$ is solvable.
Thus, $|G: M|=r$ for some prime $r$.
Notice that $M\in \mathcal{G}$ because $\mathcal{G}$ is closed under normal subgroups.

Let $\theta$ be an irreducible $p$-Brauer character of $M$. Define the inertia group of $\theta$ in $G$ as
$$T={\rm I}_{G}(\theta)=\{g\in G\mid \theta^{g}=\theta\}.$$ 
Since $|G: M|$ is a prime, we conclude that either $T=G$ or $T=M$.
By standard Clifford theory, it follows that either $\theta$ extends to $G$
or $\theta$ induces irreducibly to $G$.

If $\theta$ extends to $G$,
there exists a Brauer character $\varphi\in {\rm IBr}(G)$ such that $\varphi_{M}=\theta$.
Since $G$ is an $M_{p}$-group, we can apply Lemma \ref{lemma2.3} to conclude that $\theta$ is monomial.
If $\theta$ induces irreducibly to $G$, then $\theta^{G}\in {\rm IBr}(G)$.
Given that $G$ is a super $M_{p}$-group, we also deduce that $\theta$ is monomial.
Thus, we conclude that $M$ is an $M_{p}$-group, which implies that $M\in \mathcal{M}$.

Notice that $|M: N|<|G: N|$.
By induction on $|G: N|$ and the previous proof, we find that $N\in \mathcal{M}$.
Therefore, we can state that $\mathcal{M}$ is closed under normal subgroups.
\end{proof}

\begin{proof}[Proof of Theorem \ref{theorem2}]
We note that the class of groups with odd order is closed under normal subgroups
because normal subgroups of an odd order group also have odd order.
Therefore, the theorem follows from Lemma \ref{lemma2.4}.
\end{proof}

\section*{Acknowledgments}
The authors are very much indebted to the referee for his or her
invaluable comments which polished this paper a lot.
The authors also thank support of the program of Henan University of
Technology (2024PYJH019), the Natural Science Foundation of Henan
Province (252300421983), and the American Mathematical Society's Ky
and Yu-Fen Fan fund(253H2023LC253).

\end{document}